\documentclass{amsart}
\usepackage{graphicx} 
\usepackage{amsmath}
\usepackage{amssymb}
\usepackage{geometry}
\usepackage{geometry}
\usepackage{mathtools}
\usepackage{braket}
\usepackage{xcolor}
\usepackage{amsthm}
\author{Yiming Chen}

\newtheorem{theorem}{Theorem}
\newtheorem{lemma}{Lemma}

\newtheorem{remark}{Remark}
\newtheorem{question}{Question}

\begin{document}
\title{Low-Degree Fourier Threshold for Random Boolean Functions}
\address{School of Mathematical Sciences, Peking University}
\email{ymchenmath@math.pku.edu.cn}
\date{}
\maketitle

\begin{abstract}


We study whether a uniformly random Boolean function
$f : \{-1,1\}^p \to \{-1,1\}$ is determined by its Walsh--Fourier coefficients
of degree at most $d$. We show that the threshold lies at $p/2$ up to an
$O(\sqrt{p \log p})$ window: if
\[
d \le \frac{p}{2} - \sqrt{\frac{p}{2}\bigl(\log p + \omega(1)\bigr)},
\]
then with probability $1-o(1)$ there exists another Boolean function
$g \ne f$ with the same degree-$\le d$ coefficients. Conversely, for every
fixed $\eta \in (0,1)$, if
\[
d \ge \frac{p}{2} + \sqrt{\frac{p}{2}\log\frac{6p}{\eta^2}},
\]
then with probability at least $1-2^{-p}$, the function $f$ is uniquely
determined by its degree-$\le d$ coefficients, even among all bounded
functions $g : \{-1,1\}^p \to [-1,1]$. This resolves a question of
Vershynin~\cite{Vershynin2024LowFrequencies}.

\end{abstract}

\section*{Introduction}

An interesting problem in high-dimensional discrete analysis is whether local information determines a global object. On the Boolean cube $\{-1,1\}^p$, this can be stated equivalently in two ways: probabilistically, as whether a distribution is determined by its low-dimensional marginals; and Fourier-analytically, as whether a function is determined by its Walsh--Fourier coefficients of degree at most $d$ \cite{ODonnell2014}. These perspectives are equivalent under the indicator-function correspondence used by Vershynin \cite{Vershynin2024LowFrequencies}: fixing all marginals up to dimension $d$ is equivalent to fixing all Walsh coefficients indexed by sets $J\subset[p]$ with $|J|\le d$. Related themes also appear in pseudorandomness, where \(k\)-wise independent distributions match low-order marginals and \(\varepsilon\)-biased spaces control parity statistics~\cite{BenjaminiGurelGurevichPeled2012,NaorNaor1993}.


Motivated by this observation, Vershynin~\cite{Vershynin2024LowFrequencies} made further progress toward the uniqueness question. In detail, he proved that

\begin{theorem}[\cite{Vershynin2024LowFrequencies}]\label{T-1}
For almost every Boolean function
\[
f:\{-1,1\}^p \to \{-1,1\},
\]
there exists a bounded function
\[
f':\{-1,1\}^p \to (-1,1)
\]
such that $f'$ has the same Fourier coefficients as $f$ in all degrees up to
\[
(1/2-o(1))p.
\]
\end{theorem}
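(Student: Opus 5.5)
We want a perturbation $f' = f + h$, where $h$ has no Fourier mass in degrees $\le d$ and $\|h\|_\infty$ is small enough that $f'$ lands in $(-1,1)$. Writing $f' = f - \varepsilon f + h'$ (shrink $f$ toward $0$, then correct), the constraint is:
\begin{itemize}
\item $\varepsilon f - h'$ must have vanishing coefficients of degree $\le d$;
\item $h'$ must be small in sup norm.
\end{itemize}
The clean way to arrange this is the choice $f' = f - \varepsilon P_{>d} f$, where $P_{>d}$ is the projection onto degrees $>d$. This $f'$ automatically has the same coefficients as $f$ in degrees $\le d$. Moreover
\[
f'(x) = (1-\varepsilon) f(x) + \varepsilon P_{\le d} f(x),
\]
so $|f'(x)| < 1$ for all $x$ as soon as $\|P_{\le d} f\|_\infty < 1$ and $0<\varepsilon<1$. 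The whole statement therefore reduces to showing that for a uniformly random $f$ and $d \le (1/2 - o(1))p$, with high probability
\[
\max_{x}\Bigl|\sum_{|J|\le d} \hat f(J)\, \chi_J(x)\Bigr| < 1 .
\]

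For fixed $x$, $P_{\le d} f(x) = \sum_y K_d(x,y) f(y)$, where
\[
K_d(x,y) = 2^{-p} \sum_{|J|\le d} \chi_J(x)\chi_J(y).
\]
Since the $f(y)$ are independent Rademacher signs, Hoeffding gives
\[
\Pr\bigl(|P_{\le d} f(x)| \ge 1\bigr) \le 2\exp\Bigl(-\frac{1}{2\sum_y K_d(x,y)^2}\Bigr).
\]
By Parseval, $\sum_y K_d(x,y)^2 = K_d(x,x) = 2^{-p}\sum_{k\le d}\binom{p}{k}$, which is the same for every $x$. A union bound over the $2^p$ points then succeeds provided
\[
2^{-p}\sum_{k\le d}\binom{p}{k} \le \frac{c}{p}
\]
for a small constant $c$, say $c<1/(2\log 2)$ with room to spare. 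With that, the failure probability is at most $2^{p+1} e^{-p/(2c)} = o(1)$.

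It remains to estimate the binomial tail. By Chernoff, $2^{-p}\sum_{k\le d}\binom pk \le \exp\bigl(-2(p/2-d)^2/p\bigr)$, so it suffices that $(p/2-d)^2 \ge \tfrac p2 \log(p/c)$, that is,
\[
d \le \frac p2 - \sqrt{\frac p2 \log\frac pc}.
\]
This is $(1/2-o(1))p$, as claimed, and in fact matches the $\sqrt{p\log p}$ window in the abstract.

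The main point needing care is the concentration/union-bound step: pointwise Hoeffding must be combined with a union over all $2^p$ points. This only works because the variance proxy $K_d(x,x)$ is exponentially small compared to what is needed, yet it must still beat $1/p$ by a constant factor. Getting the constants right in $\log(p/c)$ is where the precise form of $d$ comes from. If a cleaner bound were needed, I would instead apply a Bernstein bound to the Rademacher sum, but Hoeffding already suffices for the $(1/2-o(1))p$ statement.
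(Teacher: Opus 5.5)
Your proof is correct. The Hoeffding bound with $t=1$, the identity $\sum_y K_d(x,y)^2=K_d(x,x)=2^{-p}\sum_{k\le d}\binom pk$, the union bound and the Chernoff estimate all check out. On the event $\|P_{\le d}f\|_\infty<1$ your $f'$ is $(-1,1)$-valued with the same low-degree coefficients. Already $\varepsilon=1$, i.e.\ $f'=P_{\le d}f$, works, so the interpolation is harmless but unnecessary. One small slip in your closing remark: at your threshold the variance proxy is only of order $c/p$; it is the tail $e^{-p/(2c)}$ that is exponentially small.

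The paper does not prove this statement; it is quoted from Vershynin. So the comparison is with the paper's two arguments.

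\begin{itemize}
\item Its non-uniqueness theorem is proved by pure counting: $\Phi_d$ takes at most $(N+1)^{K_d}$ values on the $2^N$ Boolean functions. This yields a Boolean competitor $g\neq f$, with failure probability $2^{-(1-o(1))2^p}$, but non-constructively. It is also a conclusion of a different kind: a Boolean $g$ never takes values in $(-1,1)$, and an interior $f'$ does not by itself produce a Boolean competitor.
\item Your argument is instead the mirror image of the paper's uniqueness proof. There, the Rademacher tail bound, the Parseval identity $\|(I-P_d)e_x\|_2^2=M_d/N$ and a union bound give $\|(I-P_d)f\|_\infty<1$ above $p/2$. You run the same three steps on the complementary projection, with $K_d(x,x)$ in place of $M_d/N$, to get $\|P_{\le d}f\|_\infty<1$ below $p/2$.
\end{itemize}

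Your route buys an explicit witness and a single symmetric mechanism. It shows that, among bounded competitors, uniqueness switches on at $p/2$ with windows of order $\sqrt{(p/2)\log p}$ on both sides. The counting route buys Booleanity of the competitor, which no interior $f'$ can supply.
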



This result places the problem in an instructive contrast with several classical positive identifiability results. Already in 1961, Chow proved that a Boolean threshold function is uniquely determined by its degree-$0$ and degree-$1$ Fourier coefficients \cite{Chow1961}; see also the later algorithmic treatment of O'Donnell and Servedio \cite{OdonnellServedio2011}. More generally, degree-$d$ polynomial threshold functions are known to be robustly identifiable from their degree-$d$ Chow parameters, i.e. from their Fourier coefficients of degree at most $d$, even within the class of bounded functions \cite{DiakonikolasKane2018}. Theorem \ref{T-1} shows that this behavior is highly nongeneric. In the absence of special algebraic or geometric structure, low-degree Fourier data typically do \emph{not} determine a Boolean function until one keeps almost half of all coordinate dimensions.

Therefore, uniqueness fails if one allows non-Boolean competitors. Naturally, the remaining 
issue is whether such a competitor can also be chosen Boolean, which leads to the following question.

\medskip
\begin{question}\label{q1}
\textit{Is it true that most Boolean functions are not determined by their Fourier 
coefficients up to almost half the dimension? Is half the dimension the optimal 
threshold?}
\end{question}


The purpose of this paper is to answer Question \ref{q1}.

\begin{theorem}\label{thm:main}
Let $f:\{-1,1\}^p \to \{-1,1\}$ be a uniformly random Boolean function.

\begin{enumerate}
    \item If
    \[
    d \le \frac{p}{2}-\sqrt{\frac{p}{2}\bigl(\log p+\omega(1)\bigr)},
    \]
    then with probability $1-o(1)$ there exists another Boolean function
    \[
    g:\{-1,1\}^p \to \{-1,1\}, \qquad g\neq f,
    \]
    such that $g$ and $f$ have the same Fourier coefficients in every degree at most $d$.

    \item Conversely, if
    \[
    d \ge \frac{p}{2}+\sqrt{\frac{p}{2}\log\frac{6p}{\eta^2}}
    \qquad \text{for fixed } \eta\in(0,1),
    \]
    then with probability at least $1-2^{-p}$, the function $f$ is uniquely determined by its Fourier coefficients in every degree at most $d$, even among all bounded functions
    \[
    g:\{-1,1\}^p \to [-1,1].
    \]
\end{enumerate}

\end{theorem}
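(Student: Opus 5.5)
For part~(1) we count: most Boolean functions cannot be identified because there are far fewer possible low-degree data than functions. For part~(2) we construct a dual certificate: with high probability the low-degree projection of $f$ sign-represents $f$, and this rules out every bounded competitor. Throughout write $\mathcal{L}_d=\{S\subseteq[p]:|S|\le d\}$, $N=|\mathcal{L}_d|$, $M=2^p-N$. By Hoeffding's inequality for $\mathrm{Bin}(p,1/2)$,
\[
N\le 2^p e^{-2t^2/p}\ \text{ if } d=\tfrac p2-t,\qquad M\le 2^p e^{-2t^2/p}\ \text{ if } d=\tfrac p2+t .
\]

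\textbf{Part (1).} Each coefficient $\hat f(S)=2^{-p}\sum_x f(x)\chi_S(x)$ lies in $2^{-p}\mathbb{Z}\cap[-1,1]$, so it takes at most $2^{p+1}+1$ values. Hence the map $f\mapsto(\hat f(S))_{S\in\mathcal{L}_d}$ has at most $(2^{p+1}+1)^N$ possible images. A function $f$ that is uniquely determined is the only preimage of its image, so the number of uniquely determined $f$ is at most the number of images. Taking $t^2=\tfrac p2(\log p+\omega(1))$ gives $N\le 2^p e^{-\omega(1)}/p$, and therefore
\[
\log_2(2^{p+1}+1)^N\le (p+2)N = 2^p e^{-\omega(1)}(1+o(1)) = o(2^p).
\]
So the fraction of uniquely determined $f$ is at most $2^{o(2^p)}/2^{2^p}=o(1)$. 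For every other $f$ there is a Boolean $g\neq f$ with the same degree-$\le d$ coefficients, which proves part~(1). The only care needed is that $\log p$ exactly cancels the factor $p$ coming from the bit-length of a coefficient. That is why this is the threshold for the counting argument.

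\textbf{Part (2), duality step.} Suppose $g:\{-1,1\}^p\to[-1,1]$ agrees with $f$ in all degrees $\le d$, and put $h=g-f$. Then $h$ has Fourier support in degrees $>d$. Since $|g|\le1$ and $f=\pm1$, we also have $f(x)h(x)\le0$ for every $x$. Now assume some $P$ of degree $\le d$ satisfies $P(x)f(x)>0$ for all $x$. Orthogonality of $P$ and $h$ gives
\[
0=\langle P,h\rangle=\mathbb{E}\bigl[(Pf)(fh)\bigr].
\]
Every term $(Pf)(fh)$ is $\le 0$, and it is strictly negative wherever $h\neq0$. Hence $h\equiv0$, i.e.\ $g=f$. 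So it suffices to show that $P=f^{\le d}$ works with probability $\ge1-2^{-p}$. Because $f\cdot f^{\le d}=1-f\cdot f^{>d}$, it is enough to prove $\max_x|f^{>d}(x)|<1$.

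\textbf{Part (2), probabilistic step; this is the main estimate.} For fixed $x$,
\[
f^{>d}(x)=2^{-p}\sum_y f(y)K(x,y),\qquad K(x,y)=\sum_{|S|>d}\chi_S(x)\chi_S(y).
\]
This is a weighted sum of independent signs, and by Parseval $\sum_y K(x,y)^2=2^pM$. Hoeffding's inequality then gives
\[
\Pr\bigl(|f^{>d}(x)|\ge \eta\bigr)\le 2\exp\Bigl(-\frac{2^{2p}\eta^2}{2\cdot 2^pM}\Bigr)\le 2\exp\Bigl(-\frac{\eta^2}{2}e^{2t^2/p}\Bigr)=2e^{-3p},
\]
using $t^2=\tfrac p2\log\frac{6p}{\eta^2}$. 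A union bound over the $2^p$ points $x$ gives failure probability at most $2^{p+1}e^{-3p}\le 2^{-p}$. On the complementary event $|f^{>d}|\le\eta<1$ everywhere, so $Pf\ge 1-\eta>0$ and part~(2) follows. The delicate point is calibrating the Hoeffding variance through Parseval so that $M/2^p$ appears. The constant $6$ is exactly what makes the union bound over $2^p$ points close with room to spare.
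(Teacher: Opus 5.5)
Your proposal is correct and follows the paper's proof essentially step for step. Part (1) is the same pigeonhole count on the image of the low-degree data map. Part (2) uses the same ingredients: the degree-$d$ truncation as a sign-representing certificate; the pointwise residual written as a Rademacher sum whose squared coefficient norm is $M/2^p$ (the paper's $M_d/N$) by Parseval; Hoeffding plus a union bound giving $2^{p+1}e^{-3p}\le 2^{-p}$; and the orthogonality/sign argument. The differences are cosmetic. You use a cruder count of $2^{p+1}+1$ values per coefficient instead of $2^p+1$. You use the explicit kernel $K(x,y)$ instead of $(I-P_d)e_x$. You write $d=p/2\pm t$ where the hypothesis is an inequality, which monotonicity of the binomial tails covers.
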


\begin{remark}
Theorem 2 places the threshold at \(p/2\) up to a window of order \(\sqrt{p\log p}\). Moreover, the lower bound of Theorem \ref{thm:main} improved Theorem \ref{T-1} from $f':\{-1,1\}^p \to (-1,1)$ to $g:\{-1,1\}^p \to \{-1,1\}$. Thus, both questions in Question~\ref{q1} are answered.
\end{remark}

\section*{Setup}

Let \(\Omega := \{-1,1\}^p\) and \(N:=|\Omega|=2^p\). For \(J\subset[p]:=\{1,\dots,p\}\), define the Walsh function
\[
w_J(x):=\prod_{j\in J} x_j,
\qquad w_{\varnothing}\equiv 1.
\]
For any function \(f:\Omega\to\mathbb R\), define its Walsh--Fourier coefficient by
\[
\widehat f(J):=2^{-p}\sum_{x\in\Omega} f(x)w_J(x).
\]
For \(d\in\{0,1,\dots,p\}\), define the low-frequency data map
\[
\Phi_d(f):=\bigl(\widehat f(J)\bigr)_{J\subset[p],\,|J|\le d},
\]
and the combinatorial quantities
\[
K_d=\sum_{k=0}^d \binom{p}{k},
\qquad
M_d:=\sum_{k=d+1}^p \binom{p}{k}=N-K_d.
\]
For a Boolean function \(f:\Omega\to\{-1,1\}\), set
\[
B_d(f):=\{g:\Omega\to[-1,1]:\Phi_d(g)=\Phi_d(f)\}.
\]
Throughout, \(\log\) denotes the natural logarithm. When we say that \(f\) is a uniformly random Boolean function, we mean that the values \(f(x)\), \(x\in\Omega\), are independent Rademacher random variables.

\begin{lemma}[Binomial tail estimate; cf.~\cite{Hoeffding1963}]\label{lem:binomial-tail}
Let \(X\sim \mathrm{Bin}(p,1/2)\). Then, for every \(t>0\),
\[
\mathbb P\!\left(X\le \frac p2-t\right)\le e^{-2t^2/p}
\qquad\text{and}\qquad
\mathbb P\!\left(X\ge \frac p2+t\right)\le e^{-2t^2/p}.
\]
\end{lemma}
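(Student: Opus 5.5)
This is a standard Chernoff–Hoeffding bound, and I would prove it by the exponential moment method applied to a sum of independent bounded variables. Write $X=\sum_{i=1}^p \xi_i$, where the $\xi_i$ are independent Bernoulli$(1/2)$ variables. By the symmetry $X\mapsto p-X$, which preserves the law $\mathrm{Bin}(p,1/2)$, the two inequalities in Lemma~\ref{lem:binomial-tail} are equivalent. So I will prove only the upper tail $\mathbb P(X\ge p/2+t)\le e^{-2t^2/p}$.

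\textbf{Step 1 (Markov on the exponential moment).} Fix $\lambda>0$. Markov's inequality gives
\[
\mathbb P\!\left(X-\tfrac p2\ge t\right)\le e^{-\lambda t}\,\mathbb E\, e^{\lambda(X-p/2)}.
\]
By independence, the right-hand side factors as
\[
e^{-\lambda t}\prod_{i=1}^p \mathbb E\, e^{\lambda(\xi_i-1/2)} = e^{-\lambda t}\cosh(\lambda/2)^p.
\]

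\textbf{Step 2 (Hoeffding's lemma, here elementary).} I need the bound $\cosh(s)\le e^{s^2/2}$. This follows by comparing Taylor series termwise, using $(2k)!\ge 2^k k!$. Applying it with $s=\lambda/2$ gives
\[
\mathbb P\!\left(X\ge \tfrac p2+t\right)\le \exp\!\left(-\lambda t+\frac{p\lambda^2}{8}\right).
\]

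\textbf{Step 3 (optimize).} Choosing $\lambda=4t/p$ makes the exponent $-4t^2/p+2t^2/p=-2t^2/p$, which is the claimed bound. The lower tail then follows from the symmetry noted above, or equivalently by rerunning Steps 1–3 on $p-X$.

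There is no real obstacle here. The only step needing any care is the inequality $\cosh s\le e^{s^2/2}$, which is the Rademacher case of Hoeffding's lemma, and the termwise comparison settles it. Alternatively, I could simply cite \cite{Hoeffding1963}: the variables $\xi_i$ take values in $[0,1]$, and Hoeffding's inequality gives exactly $e^{-2t^2/p}$.
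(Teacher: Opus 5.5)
Your proof is correct: the computation $\mathbb E\,e^{\lambda(\xi_i-1/2)}=\cosh(\lambda/2)$, the bound $\cosh s\le e^{s^2/2}$ via $(2k)!\ge 2^k k!$, the choice $\lambda=4t/p$, and the reflection $X\mapsto p-X$ all check out. The paper gives no proof of this lemma and simply cites Hoeffding, so your argument is the standard exponential-moment derivation of that cited inequality, specialized to fair Bernoulli summands, and is essentially the same approach.
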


%

\begin{lemma}\label{lem:rademacher-tail}
Let \(\varepsilon_1,\dots,\varepsilon_m\) be independent Rademacher random variables and let \(a=(a_1,\dots,a_m)\in\mathbb R^m\). Then, for every \(\eta>0\),
\[
\mathbb P\!\left(\left|\sum_{i=1}^m a_i\varepsilon_i\right|>\eta\right)
\le
2\exp\!\left(-\frac{\eta^2}{2\|a\|_2^2}\right).
\]
\end{lemma}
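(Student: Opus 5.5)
This is the standard sub-Gaussian (Hoeffding) bound for Rademacher sums, and I would prove it by the exponential moment method. Set $S=\sum_{i=1}^m a_i\varepsilon_i$. If $a=0$, then $S\equiv 0$ and the event $|S|>\eta$ is empty, so the bound holds trivially (the right-hand side is read as $0$). From now on assume $\|a\|_2>0$.

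\emph{Step 1: moment generating function.} Fix $\lambda>0$. By independence,
\[
\mathbb E\, e^{\lambda S}=\prod_{i=1}^m \cosh(\lambda a_i).
\]
Comparing Taylor series termwise, using $(2k)!\ge 2^k k!$, gives $\cosh u\le e^{u^2/2}$. Hence
\[
\mathbb E\, e^{\lambda S}\le \exp\!\left(\frac{\lambda^2\|a\|_2^2}{2}\right).
\]

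\emph{Step 2: Chernoff bound for the upper tail.} Markov's inequality applied to $e^{\lambda S}$ gives
\[
\mathbb P(S>\eta)\le \exp\!\left(-\lambda\eta+\frac{\lambda^2\|a\|_2^2}{2}\right).
\]
Choosing $\lambda=\eta/\|a\|_2^2$ minimizes the exponent and yields
\[
\mathbb P(S>\eta)\le \exp\!\left(-\frac{\eta^2}{2\|a\|_2^2}\right).
\]

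\emph{Step 3: symmetry and union bound.} The vector $(-\varepsilon_i)_i$ has the same law as $(\varepsilon_i)_i$, so $-S$ has the same distribution as $S$. Therefore $\mathbb P(S<-\eta)$ satisfies the same bound as in Step 2. A union bound over the two tails gives the factor $2$ and completes the proof.

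There is no real obstacle here. The only step needing care is the elementary inequality $\cosh u\le e^{u^2/2}$, which follows from comparing the coefficients $1/(2k)!$ and $1/(2^k k!)$ of $u^{2k}$ in the two series.
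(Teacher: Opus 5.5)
Your proof is correct and complete. The MGF bound via $\cosh u\le e^{u^2/2}$, the Chernoff optimization at $\lambda=\eta/\|a\|_2^2$, and the symmetry plus union-bound step give exactly the stated inequality. Your convention for $a=0$ also covers the degenerate case $M_d=0$ (i.e.\ $d=p$), which can occur in the paper's application.

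The paper states this lemma without proof and treats it as the standard Hoeffding-type bound for Rademacher sums, so your argument supplies the standard derivation the paper relies on implicitly.
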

\subsection*{Proof strategy}  

The proof of the lower bound is essentially counting-theoretic. Each Fourier coefficient \(\widehat f(J)\) of a Boolean function takes at most \(2^p+1\) values, because it is the normalized sum of \(2^p\) signs. Therefore the complete low-frequency vector \((\widehat f(J))_{|J|\le d}\) can take at most \((2^p+1)^{K_d}\) distinct values. When \(K_d \ll 2^p/p\), this number is exponentially negligible compared with the total number \(2^{2^p}\) of Boolean functions. Hence collisions are unavoidable for the overwhelming majority of Boolean functions, and these collisions already occur inside the Boolean class.

The upper bound is proved by a direct truncation argument. Let \(P_df\) denote the degree-\(d\) Fourier truncation of \(f\). For each fixed point \(x\in\{-1,1\}^p\), the error \(f(x)-P_df(x)\) can be written as a Rademacher linear form in the independent values \(f(y)\), \(y\in\Omega\), and its variance is exactly
\[
2^{-p}\sum_{k=d+1}^p \binom pk.
\]
A pointwise subgaussian estimate followed by a union bound over all \(2^p\) points shows that, on the upper side of the threshold, one has \(\|P_df-f\|_\infty<1\) with overwhelming probability. Consequently, \(P_df\) has the same pointwise sign as \(f\). If \(g:\{-1,1\}^p\to[-1,1]\) has the same degree-\(\le d\) Fourier coefficients as \(f\), then \(g-f\) is orthogonal to every degree-\(\le d\) polynomial, and in particular to \(P_df\). On the other hand, the sign agreement between \(P_df\) and \(f\) forces every summand in
\[
\sum_x (g(x)-f(x))P_df(x)
\]
to be nonpositive. Since the sum is also zero by orthogonality, each summand must vanish, and hence \(g=f\) pointwise.

\section*{Proof of Lower Bound}


\begin{theorem}\label{prop:boolean-collision}
Choose \(f:\Omega\to\{-1,1\}\) uniformly at random among all Boolean functions on \(\Omega\). Then, for every \(d\in\{0,1,\dots,p\}\),
\[
\mathbb P\Big(\exists g:\Omega\to\{-1,1\},\ g\ne f,\ \Phi_d(g)=\Phi_d(f)\Big)
\ge
1-\frac{(N+1)^{K_d}}{2^N}.
\]
Consequently, if
\[
d\le \frac p2-\sqrt{\frac p2\bigl(\log p+\omega(1)\bigr)},
\]
then
\[
\mathbb P\Big(\exists g:\Omega\to\{-1,1\},\ g\ne f,\ \Phi_d(g)=\Phi_d(f)\Big)\to 1
\qquad (p\to\infty).
\]
In particular, the same conclusion holds for \(d=\lfloor cp\rfloor\) for every fixed constant \(c<1/2\).
\end{theorem}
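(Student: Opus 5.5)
The first claim is a pigeonhole count over fibres of $\Phi_d$ restricted to Boolean functions. Each coordinate $\widehat f(J)=2^{-p}\sum_x f(x)w_J(x)$ is $2^{-p}$ times a sum of $N$ signs, so it lies in $\{2^{-p}(N-2m):m=0,\dots,N\}$, a set of size $N+1$. Hence the image $\Phi_d(\{-1,1\}^\Omega)$ has at most $V:=(N+1)^{K_d}$ elements.

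Call a Boolean $f$ \emph{isolated} if its fibre $\Phi_d^{-1}(\Phi_d(f))\cap\{-1,1\}^\Omega$ is $\{f\}$. Distinct isolated functions have distinct images, so there are at most $V$ isolated functions. Since $f$ is uniform over $2^N$ Boolean functions, the probability that $f$ is isolated is at most $V/2^N$. The complementary event is exactly the existence of a Boolean $g\ne f$ with $\Phi_d(g)=\Phi_d(f)$, which gives the stated bound $1-(N+1)^{K_d}/2^N$.

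For the asymptotic part, it suffices to show $K_d\log_2(N+1)-N\to-\infty$. Since $\log_2(N+1)\le p+1$, it is enough to have $K_d(p+1)\le N\,\delta_p$ with $\delta_p\to 0$, because then the exponent is at most $-N(1-\delta_p)\to-\infty$.

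Write $d=p/2-t$ with $t=\sqrt{\frac p2(\log p+\omega_p)}$, $\omega_p\to\infty$. Then $K_d/N=\mathbb P(X\le d)$ for $X\sim\mathrm{Bin}(p,1/2)$, and Lemma~\ref{lem:binomial-tail} gives
\[
\frac{K_d}{N}\le e^{-2t^2/p}=e^{-\log p-\omega_p}=\frac{e^{-\omega_p}}{p}.
\]
Hence $K_d(p+1)/N\le \frac{p+1}{p}e^{-\omega_p}\to 0$, as required.

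The main thing to check is that the $\sqrt{\log p}$ correction is exactly what absorbs the factor $p+1$ coming from $\log_2(N+1)$. The computation above confirms this: $e^{-2t^2/p}$ contributes precisely the factor $1/p$, and the $\omega(1)$ term supplies the vanishing factor $e^{-\omega_p}$. If $\omega_p$ is not assumed positive, or the bound on $d$ holds only up to an additive constant, it may need a brief adjustment, for example replacing $\omega_p$ by $\max(\omega_p,0)$ or absorbing a constant into it; otherwise the estimate is routine.

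Finally, for $d=\lfloor cp\rfloor$ with fixed $c<1/2$, we have $p/2-d\ge(1/2-c)p$. This exceeds $\sqrt{\frac p2(\log p+\log p)}$ for large $p$, so the hypothesis holds with $\omega_p=\log p$, and the conclusion follows.
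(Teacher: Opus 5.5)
Your proposal is correct and follows essentially the same route as the paper: you bound the number of Boolean functions that are unique in their fibre by $|\operatorname{Im}\Phi_d|\le (N+1)^{K_d}$, and you use the Hoeffding tail estimate $K_d/N\le e^{-\omega_p}/p$ to absorb the factor $\log_2(N+1)\le p+1$. Replacing $d=p/2-t$ by $d\le p/2-t$, which only needs monotonicity of $K_d$ in $d$, is a harmless simplification.
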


\begin{proof}
For a fixed set \(J\subset[p]\), define
\[
S_J(f):=\sum_{x\in\Omega} f(x)w_J(x).
\]
Since each factor \(f(x)w_J(x)\) belongs to \(\{-1,1\}\), the quantity \(S_J(f)\) is a sum of \(N\) many \(\pm1\) terms. Therefore
\[
S_J(f)\in\{-N,-N+2,-N+4,\dots,N-2,N\},
\]
so \(S_J(f)\) has at most \(N+1\) possible values. 

Then we have
\[
\widehat f(J)=\frac{1}{N}S_J(f),
\]
it follows that each Fourier coefficient \(\widehat f(J)\) also has at most \(N+1\) possible values.

The vector \(\Phi_d(f)\) has exactly \(K_d\) coordinates, one for each \(J\subset[p]\) with \(|J|\le d\). Hence
\[
|\operatorname{Im}(\Phi_d)|\le (N+1)^{K_d}.
\]
Now let
\[
U_d:=\Bigl\{f\in\{-1,1\}^{\Omega}: \{g\in\{-1,1\}^{\Omega}:\Phi_d(g)=\Phi_d(f)\}=\{f\}\Bigr\}.
\]
Thus \(U_d\) is exactly the set of Boolean functions that are uniquely determined, among Boolean functions, by their low-frequency data up to degree \(d\). If \(f_1,f_2\in U_d\) and \(\Phi_d(f_1)=\Phi_d(f_2)\), then \(f_2\) belongs to the set
\[
\{g\in\{-1,1\}^{\Omega}:\Phi_d(g)=\Phi_d(f_1)\},
\]
which equals \(\{f_1\}\) by the definition of \(U_d\). Hence \(f_2=f_1\). Therefore the restriction of \(\Phi_d\) to \(U_d\) is injective, and so
\[
|U_d|\le |\operatorname{Im}(\Phi_d)|\le (N+1)^{K_d}.
\]
Since the total number of Boolean functions on \(\Omega\) is \(2^N\), we obtain
\[
\mathbb P(f\in U_d)=\frac{|U_d|}{2^N}\le \frac{(N+1)^{K_d}}{2^N}.
\]
Taking complements proves the first claim.

For the asymptotic claim, let \(X\sim\mathrm{Bin}(p,1/2)\). Then
\[
K_d=\sum_{k=0}^d \binom{p}{k}=N\,\mathbb P(X\le d).
\]
Assume that
\[
d\le \frac p2-t,
\qquad
t:=\sqrt{\frac p2\bigl(\log p+\omega(1)\bigr)}.
\]
By Lemma~\ref{lem:binomial-tail},
\[
\mathbb P(X\le d)
\le
\mathbb P\!\left(X\le \frac p2-t\right)
\le
\exp\!\left(-\frac{2t^2}{p}\right)
=
\exp\bigl(-\log p-\omega(1)\bigr).
\]
Therefore,
\[
K_d
\le
N\exp\bigl(-\log p-\omega(1)\bigr)
=
\frac{N}{p}e^{-\omega(1)}
=
 o\!\left(\frac{N}{p}\right).
\]
Consequently,
\[
\frac{(N+1)^{K_d}}{2^N}
=
\exp\bigl(K_d\log(N+1)-N\log 2\bigr).
\]
Since \(N=2^p\), we have \(N+1\le 2^{p+1}\), and thus
\[
K_d\log(N+1)
\le
(p+1)(\log 2)K_d
=
 o(N).
\]
Hence
\[
K_d\log(N+1)-N\log 2=-(1+o(1))N\log 2\to -\infty,
\]
which implies
\[
\frac{(N+1)^{K_d}}{2^N}\to 0.
\]
Substituting this into the first bound yields the desired convergence to \(1\).

Finally, if \(d=\lfloor cp\rfloor\) with a fixed \(c<1/2\), then
\[
\frac p2-d=(1/2-c)p+O(1)\gg \sqrt{p\log p},
\]
so the displayed hypothesis is satisfied for all sufficiently large \(p\).
\end{proof}

\section*{Proof of Upper Bound}
\begin{theorem}\label{thm:upper-unique}
Fix \(\eta\in(0,1)\). If
\[
d\ge \frac p2+\sqrt{\frac p2\log\frac{6p}{\eta^2}},
\]
then
\[
\mathbb P\bigl(B_d(f)=\{f\}\bigr)\ge 1-2^{-p}.
\]
Consequently, for every fixed \(\eta\in(0,1)\), if
\[
d\ge \frac p2+\sqrt{\frac p2\bigl(\log p+\omega(1)\bigr)},
\]
then
\[
\mathbb P\bigl(B_d(f)=\{f\}\bigr)\to 1
\qquad (p\to\infty).
\]
\end{theorem}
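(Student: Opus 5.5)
We follow the truncation strategy outlined above. Let $P_d f:=\sum_{|J|\le d}\widehat f(J)w_J$ be the degree-$d$ truncation, so that $f-P_df=\sum_{|J|>d}\widehat f(J)w_J$. Fix $x\in\Omega$. Expanding $\widehat f(J)=2^{-p}\sum_y f(y)w_J(y)$ gives
\[
f(x)-P_df(x)=\sum_{y\in\Omega} a_{x}(y)f(y),\qquad a_x(y):=2^{-p}\sum_{|J|>d}w_J(x)w_J(y).
\]
This is a Rademacher linear form in the independent signs $f(y)$. By Parseval and the orthogonality relation $\sum_y w_J(y)w_{J'}(y)=N\delta_{JJ'}$, we get
\[
\|a_x\|_2^2=2^{-2p}\sum_{|J|>d}N\,w_J(x)^2=2^{-p}M_d=\mathbb P(X>d),
\]
where $X\sim\mathrm{Bin}(p,1/2)$. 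By Lemma~\ref{lem:binomial-tail} and the hypothesis on $d$, writing $t=d-p/2$,
\[
\|a_x\|_2^2\le e^{-2t^2/p}\le \frac{\eta^2}{6p}.
\]

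Next apply Lemma~\ref{lem:rademacher-tail} with threshold $\eta$. This gives
\[
\mathbb P\bigl(|f(x)-P_df(x)|>\eta\bigr)\le 2\exp\!\Bigl(-\frac{\eta^2}{2\|a_x\|_2^2}\Bigr)\le 2e^{-3p}.
\]
A union bound over the $N=2^p$ points then yields $\mathbb P(\|f-P_df\|_\infty>\eta)\le 2^{p+1}e^{-3p}$. This is at most $2^{-p}$, since $2^{2p+1}e^{-3p}=2(4/e^3)^p\le 1$ for all $p\ge1$. (The constant $6$ in the hypothesis is exactly what makes this absorption work; this bookkeeping is the only delicate point.) On the complementary event, $\|f-P_df\|_\infty\le\eta<1$. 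Because $f(x)\in\{\pm1\}$, this implies $P_df(x)$ has the same sign as $f(x)$ and $|P_df(x)|\ge 1-\eta>0$ for every $x$.

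Now take $g\in B_d(f)$. Then $g-f$ has vanishing Fourier coefficients of degree $\le d$, so it is orthogonal to $P_df$, which lies in the span of $\{w_J:|J|\le d\}$. Hence
\[
\sum_x (g(x)-f(x))P_df(x)=0.
\]
Since $|g(x)|\le1$, the factor $g(x)-f(x)$ is $\le0$ when $f(x)=1$ and $\ge0$ when $f(x)=-1$. In both cases $P_df(x)$ has the sign of $f(x)$, so every summand is $\le 0$. A sum of nonpositive terms equal to zero forces each term to vanish. Since $P_df(x)\ne0$, we conclude $g(x)=f(x)$ for all $x$, i.e.\ $B_d(f)=\{f\}$. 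This proves the first claim.

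For the asymptotic consequence, suppose $d\ge p/2+\sqrt{\tfrac p2(\log p+\omega(1))}$. Then for large $p$ the quantity $\log p+\omega(1)$ eventually exceeds $\log(6p/\eta^2)=\log p+\log(6/\eta^2)$, because $\eta$ is fixed. So the first part applies, and the probability is at least $1-2^{-p}\to1$.
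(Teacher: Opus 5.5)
Your proof is correct and follows essentially the same route as the paper. You write $f(x)-P_df(x)$ as a Rademacher linear form whose coefficient vector has squared norm $M_d/N$; the paper's $(I-P_d)e_x$ is exactly your $a_x$, computed via Parseval instead of directly. You then combine the subgaussian tail, the binomial tail and a union bound to get $\|f-P_df\|_\infty\le\eta$ with probability at least $1-2^{-p}$, and use orthogonality of $g-f$ to $P_df$ plus sign agreement to force $g=f$.
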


\begin{proof}
For this proof, it is convenient to work on \(\mathbb R^{\Omega}\) with the Euclidean inner product
\[
\langle u,v\rangle:=\sum_{x\in\Omega} u(x)v(x).
\]
For each \(J\subset[p]\), define
\[
\varphi_J:=N^{-1/2}w_J.
\]
Since the Walsh functions are orthogonal in normalized \(L^2(\Omega)\), the family \((\varphi_J)_{J\subset[p]}\) is an orthonormal basis of \((\mathbb R^{\Omega},\langle\cdot,\cdot\rangle)\).

Let
\[
V_d:=\operatorname{span}\{\varphi_J:|J|\le d\},
\]
and recall that \(P_d\) denote the orthogonal projection onto \(V_d\). Define
\[
q_d:=P_df,
\qquad
r_d:=f-q_d=(I-P_d)f.
\]
Note that
\[
\langle f,\varphi_J\rangle=N^{1/2}\widehat f(J),
\]
we have
\[
q_d(x)=\sum_{J\subset[p],\,|J|\le d} \widehat f(J)w_J(x)
\qquad \text{for every } x\in\Omega.
\]
Thus \(q_d\) is exactly the degree-\(d\) Walsh--Fourier truncation of \(f\).

Fix \(x\in\Omega\), and let \(e_x\in\mathbb R^{\Omega}\) be the standard basis vector at \(x\). Since \(I-P_d\) is self-adjoint,
\[
r_d(x)
=
\langle e_x,r_d\rangle
=
\langle e_x,(I-P_d)f\rangle
=
\langle (I-P_d)e_x,f\rangle.
\]
Set
\[
a^{(x)}:=(I-P_d)e_x\in\mathbb R^{\Omega}.
\]
Then
\[
r_d(x)=\sum_{y\in\Omega} a^{(x)}_y f(y).
\]
Since the random variables \(f(y)\), \(y\in\Omega\), are independent Rademacher variables, Lemma~\ref{lem:rademacher-tail} yields
\[
\mathbb P\bigl(|r_d(x)|>\eta\bigr)
\le
2\exp\!\left(-\frac{\eta^2}{2\|a^{(x)}\|_2^2}\right).
\]
We now compute \(\|a^{(x)}\|_2^2\). Recall that \((\varphi_J)_{J\subset[p]}\) is an orthonormal basis and
\[
\langle e_x,\varphi_J\rangle=N^{-1/2}w_J(x),
\]
Parseval's identity gives
\[
\|(I-P_d)e_x\|_2^2
=
\sum_{J\subset[p],\,|J|>d} |\langle e_x,\varphi_J\rangle|^2
=
\sum_{J\subset[p],\,|J|>d} \frac{1}{N}
=
\frac{M_d}{N}.
\]
Therefore,
\[
\mathbb P\bigl(|r_d(x)|>\eta\bigr)
\le
2\exp\!\left(-\frac{\eta^2N}{2M_d}\right).
\]
Taking the union bound over all \(x\in\Omega\), we obtain
\[
\mathbb P\bigl(\|r_d\|_{\infty}>\eta\bigr)
\le
2N\exp\!\left(-\frac{\eta^2N}{2M_d}\right).
\]

It remains to control \(M_d\). Let \(X\sim\mathrm{Bin}(p,1/2)\). Then
\[
M_d=\sum_{k=d+1}^p \binom{p}{k}=N\,\mathbb P(X>d).
\]
By Lemma~\ref{lem:binomial-tail} and the assumption on \(d\),
\[
\frac{M_d}{N}
=
\mathbb P(X>d)
\le
\mathbb P\!\left(X-\frac p2\ge \sqrt{\frac p2\log\frac{6p}{\eta^2}}\right)
\le
\exp\!\left(-\log\frac{6p}{\eta^2}\right)
=
\frac{\eta^2}{6p}.
\]
Hence
\[
\mathbb P\bigl(\|r_d\|_{\infty}>\eta\bigr)
\le
2N e^{-3p}
=
2^{p+1}e^{-3p}
\le
2^{-p}.
\]
(The last inequality holds for every \(p\ge 1\).)
Thus, with probability at least \(1-2^{-p}\), we have
\[
\|q_d-f\|_{\infty}=\|r_d\|_{\infty}\le \eta.
\]

Now fix a realization of \(f\) for which this bound holds. Since \(\eta<1\) and \(f(x)\in\{-1,1\}\), we have for every \(x\in\Omega\):
\[
f(x)=1 \implies q_d(x)\ge 1-\eta>0,
\qquad
f(x)=-1 \implies q_d(x)\le -1+\eta<0.
\]
Therefore,
\[
\operatorname{sign}(q_d(x))=f(x)
\qquad\text{and}\qquad
q_d(x)\ne 0
\qquad\forall x\in\Omega.
\]

Let \(g\in B_d(f)\), and define \(h:=g-f\). Since \(\Phi_d(g)=\Phi_d(f)\), we have
\[
\widehat h(J)=0
\qquad\text{for all } J\subset[p] \text{ with } |J|\le d.
\]
Thus
\[
\langle h,\varphi_J\rangle=N^{1/2}\widehat h(J),
\]
this means that \(h\perp V_d\) in the Euclidean inner product. Since \(q_d\in V_d\), it follows that
\[
0=\langle h,q_d\rangle
=
\sum_{x\in\Omega} (g(x)-f(x))q_d(x).
\]
For each \(x\in\Omega\), the summand is nonpositive. Indeed:
\begin{itemize}
\item if \(f(x)=1\), then \(q_d(x)>0\) and \(g(x)-f(x)=g(x)-1\le 0\), so \((g(x)-f(x))q_d(x)\le 0\);
\item if \(f(x)=-1\), then \(q_d(x)<0\) and \(g(x)-f(x)=g(x)+1\ge 0\), so again \((g(x)-f(x))q_d(x)\le 0\).
\end{itemize}
Thus every term in the finite sum is nonpositive, and the sum is equal to \(0\). Therefore every term must vanish:
\[
(g(x)-f(x))q_d(x)=0
\qquad\forall x\in\Omega.
\]
Since \(q_d(x)\ne 0\) for every \(x\), we conclude that \(g(x)=f(x)\) for every \(x\in\Omega\). Hence \(g=f\), and therefore
\[
B_d(f)=\{f\}.
\]
This proves the first statement. The asymptotic statement follows immediately because, for fixed \(\eta\in(0,1)\),
\[
\log\frac{6p}{\eta^2}=\log p+O(1).
\]
\end{proof}

\end{document}